\documentclass[11pt,a4paper]{amsart}
\usepackage[T1]{fontenc}
\usepackage[latin9]{inputenc}
\setlength{\parskip}{\medskipamount}
\setlength{\parindent}{0pt}
\usepackage{amssymb}
\usepackage{color}
\usepackage{graphicx,color}
\usepackage{amsmath, amssymb, graphics}
\usepackage{hyperref}

\makeatletter
%%%%%%%%%%%%%%%%%%%%%%%%%%%%%% Textclass specific LaTeX commands.
\numberwithin{equation}{section} %% Comment out for sequentially-numbered
\numberwithin{figure}{section} %% Comment out for sequentially-numbered
  \@ifundefined{theoremstyle}{\usepackage{amsthm}}{}
  \theoremstyle{plain}
  \newtheorem{thm}{Theorem}[section]
  \theoremstyle{plain}
  
  \theoremstyle{plain}
  \newtheorem{prop}[thm]{Proposition}
  \theoremstyle{remark}
  \newtheorem{rem}[thm]{Remark}
  \theoremstyle{plain}
 \newtheorem{mydef}[thm]{Definition}
  \theoremstyle{remark}
  
  \theoremstyle{plain}
  \newtheorem{lem}[thm]{Lemma}

\setlength{\textwidth}{14cm}
\setlength{\oddsidemargin}{1cm}
\setlength{\evensidemargin}{1cm}
\setlength{\textheight}{19cm}
\setlength{\parskip}{2mm}
\setlength{\parindent}{0em}
\setlength{\headsep}{1.5cm}

%%%%%%%%%%%%%%%%%%%%%%%%%%%%%% User specified LaTeX commands.

\smallskip
\def\<{{\langle }}
\def\>{{\rangle }}

\smallskip
\def\<{{\langle }}
\def\>{{\rangle }}

%%%%%%%%%%%%%%%%%%%%%%%%%%%%%% User specified LaTeX commands.

\begin{document}

\title[Lagrange point of Euler solutions]{Lagrange points of Euler's solutions of the 3-body problem}
\author{Oscar Perdomo}
\date{\today}

\maketitle
\centerline{\scshape \'Oscar Perdomo\footnote{Corresponding author} }
%\medskip
{\footnotesize
 % please put the address of the second  and third author
 \centerline{ Department of Mathematics}
   \centerline{Central Connecticut State University}
   \centerline{New Britain, CT 06050, USA}
}

%\subjclass[2000]{53C42, 53A10}
%\maketitle

\begin{abstract}

In this paper we classify the central configurations of the circular restricted 4-body problem with three primaries at the collinear configuration of the 3-body problem and an infinitesimal mass. The case where the three primaries have the same mass, with one of the bodies staying motionless at the center of mass, was considered in 2021 by Llibre. The video \url{https://youtu.be/PWFtqxd4RUA} goes over some of the results in this paper.

\end{abstract}
\begin{center}\rule{0.9\textwidth}{0.1mm}
\end{center}

%\begin{center}
	%\includegraphics[width=9cm]{intro.eps}
%\end{center}

%first detailed introduction of this problem can be found in \cite{MR2267950}. 

%\begin{figure}[h]
%\centerline
%{\includegraphics[scale=0.19]{HIPHOP.pdf}}
%\caption{Six bodies moving at the vertices of a regular antiprism all time. The function $r(t)$ is the distance of each body respect to the $z$-axis passing thought to center of the two parallel planes where each group of three bodies moves. The function $d(t)$ is the distance to the plane $z=0$.}
%\label{fig:Fig1}
%\end{figure}

%%%%%%%%%%%%%%%%%%%%%%%%%%%%%%%%%%%%%%%%%%%%%%%%%%%%%%%%%%%%%%%%%%%%%%%%%%%%%%%%%%%%%%%%%%%%%%%%%%%%%%%%%%%%%%%%%%%%%%%%%%%%%%%%%%%%%%

\section{Introduction} In this paper we will consider the $n$-body problem. If we assume that the $i^{th}$ body has mass $m_i$ and its  motion  is described  by the vector function $X_i(t)$, then these functions satisfy the following system of $n$ differential equations of order two:

$$m_i \ddot{X}_i=\sum_{j\ne i} \frac{G m_i m_j}{|X_j-X_i|^3}(X_j-X_i) $$ 

where $G$ is a constant known as the gravitational constant. By canceling $m_1$ in the first equation, $m_2$ in the second equation, and so on, we get the system

\begin{eqnarray}\label{enbp} 
\ddot{X}_i=\sum_{j\ne i} \frac{G m_j}{|X_j-X_i|^3}(X_j-X_i)
\end{eqnarray}

These basic cancellations allow us to consider the case where one of the masses is zero, leading to what is known as the restricted $n$-body problem. Usually, the $n-1$ bodies with non-zero masses are called the {\it primaries}. The restricted $n$ body problem is useful to study the motion of asteroids and probes in the presence of planets, moons, and stars. The 2-body problem has been completely solved. The general solution for the $n$-body problem for  $n>2$ is an open problem and it has been responsible for big developments in the theory of dynamical systems.

In 1770, in search of explicit solutions of the three body problem, Euler classified all the solutions of the three body problem where three bodies are collinear and move on concentric circles with the same constant angular velocity, \cite{E}. In particular, when one of the three masses is zero, Euler's solutions provide the solutions of the restricted 3-body problem known as the Lagrange points $L_1$, $L_2$, and $L_3$. Two years later, Lagrange discovered the restricted solutions of the three body problem known as the Lagrange points $L_4$ and $L_5$, \cite{L}.  

In this paper we will find all the solutions of the restricted 4 body problem where the three primaries are the solutions discovered by Euler, and a fourth body with mass zero moves on a circle with the same center, at the same angular velocity, as that of the primaries. We call these solutions {\it Euler-Lagrange solutions} or   {\it Euler-Lagrange points}.

We will see that the family of solutions found by Euler can be described with two parameters. For one of these solutions, the one where the three bodies have the same mass and  the body in the middle stays fixed, Llibre found that there are 6 Euler-Lagrange points, four of which are collinear with the primaries \cite{L1}. Later on, Llibre, Pasca, and Valls studied the stability of these six Euler-Lagrange points, \cite{L2}.

%new part added
Euler-Lagrange solutions are particular examples of central configurations. For a review on central configurations we refer to \cite{M}. For results regarding the existence of  collinear central configurations we refer to \cite{P} and \cite{Mou}. There are many results for central configurations of the 4-body problem; refer to the references in the \cite{L1}

%end new part added

This paper is organized as follows: Section \ref{tbp} reviews Euler's solutions. The presentation that we do here is slightly different to the one shown by Euler and it shows how these solutions depend on two parameters, one of them, $m_3$, being the mass of one of the bodies.  Since the Lagrange points will play an important role in the description of the Euler-Lagrange points, in Section \ref{cL} we will explain how to deduce the Lagrange points $L_1$, $L_2$ and $L_3$ from  the Euler's solutions and we also show that, even though there is no formula in terms of radicals to find them, due to the fact that they are given in terms of a quintic equation, there exists a relation between them in terms of radicals. Section \ref{LeP} explain all the Euler-Lagrange points. We divide this last section into two subsections. The first one deals with the case when one of primaries remains fixed at the center of gravity (as Llibre did) but we allow the masses to be different. The second subsection computes the Euler-Lagrange points in the general case, when all of the primaries move.

\section{Euler's solutions of the three-body problem}\label{tbp}

In this section we classify all the collinear solutions of the of the three-body problem with masses $m_1$, $m_2$, and $m_3$ that move in circles around the origin. This work was done by Euler, \cite{E}, and for the sake of completeness and notation we deduce these solutions here in a slightly different way. Without loss of generality, by changing the unit of mass,  we can assume that the gravitational constant is one. We also assume, by changing the unit of time, that the angular velocity is one and finally, by changing the unit of distance, we will assume that the body that is alone on one side is at distance one from the center of mass. More precisely, we assume that for some $r_2$ and $r_3$ with $0<r_2<r_3$,

\begin{eqnarray}
X_1&=&(\cos t,\sin t)\nonumber\\
X_2&=&-r_2 (\cos t,\sin t)\label{systemE}\\
X_3&=&-r_3 (\cos t,\sin t)\nonumber
\end{eqnarray}

From Equation (\ref{enbp}) we obtain 6 equations that reduce to the following three:

\begin{eqnarray}\label{inieqs1}
\frac{m_2}{(r_2+1)^2}+\frac{m_3-(r_3+1)^2}{(r_3+1)^2}&=&0\\
\label{inieqs2} \frac{r_2 (r_2+1)^2-m_1}{(r_2+1)^2}+\frac{m_3}{(r_2-r_3)^2}&=&0\\
\label{inieqs3} \frac{r_3 (r_3+1)^2-m_1}{(r_3+1)^2}-\frac{m_2}{(r_2-r_3)^2}&=&0
\end{eqnarray}

Using equation (\ref{inieqs1}) and(\ref{inieqs2}) we get that 

\begin{eqnarray}\label{m1}
m_1&=&\frac{(r_2+1)^2 \left(m_3+r_2 (r_2-r_3)^2\right)}{(r_2-r_3)^2}\\
\label{m2} m_2&=&(r_2+1)^2 \left(1-\frac{m_3}{(r_3+1)^2}\right)
\end{eqnarray}

From Equation (\ref{m1}) we get that $m_1$ is always positive for any $m_3\ge0$. From Equation (\ref{m2}) we get that $m_3$ must satisfy $0\le m_3\le (r_3+1)^2$ and moreover, if $m_3=0$ then $X_3$ provides a solution of the restricted $3$ body problem. That is,  $-r_3$ reduces to a Lagrange point with primaries located at $-r_2$ and $1$. Likewise, if $m_3= (r_3+1)^2$, then $m_2$ is zero and $-r_2$ becomes a Lagrange point. Replacing equations (\ref{m1}) and (\ref{m2}) into Equation (\ref{inieqs3}) we get that 

\begin{eqnarray}
p(r_2,r_3)&=&-r_2^5+2 r_2^4 r_3-2 r_2^4-r_2^3 r_3^2+4 r_2^3 r_3-r_2^3+r_2^2 r_3^3-r_2^2 r_3^2+r_2^2 r_3 \label{poly}\\
& &-r_2^2 - 2 r_2 r_3^4-4 r_2 r_3^3-5 r_2 r_3^2-4 r_2 r_3-2 r_2+r_3^5\nonumber \\
& & +2 r_3^4+r_3^3-r_3^2-2 r_3-1=0 \nonumber
\end{eqnarray}

Let us show that for values of $r_2\ge0$ and $r_3>0$ the level set $p(r_2,r_3)$ defines a function in term of $r_2$. Figure \ref{levelsetp} shows the graph of this function. 

\begin{lem} \label{df} The relation $p(r_2,r_3)=0$ given in Equation (\ref{poly}) defines a function $r_3=f(r_2)$ for values of $r_2\ge 0$. We have that $f(0)=1$.
\end{lem}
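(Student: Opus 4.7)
The plan is to rewrite $p(r_2, r_3) = 0$ in a factored form that comes directly from the physical derivation of Equation (\ref{poly}), and then to exploit a simple monotonicity argument in $r_3$ for each fixed $r_2$. The key observation is that after substituting Equations (\ref{m1}) and (\ref{m2}) into Equation (\ref{inieqs3}) and clearing denominators, the parameter $m_3$ cancels (as one expects, since $p$ does not involve $m_3$), leaving only a relation in $r_2$ and $r_3$. Using the elementary factorization
\begin{equation*}
r_3(r_3+1)^2 - r_2(r_2+1)^2 \;=\; (r_3 - r_2)\bigl(r_3^2 + r_2 r_3 + r_2^2 + 2 r_3 + 2 r_2 + 1\bigr),
\end{equation*}
I expect $p(r_2,r_3)=0$ to be equivalent to
\begin{equation*}
(r_3 - r_2)^3\,\bigl(r_3^2 + r_2 r_3 + r_2^2 + 2 r_3 + 2 r_2 + 1\bigr) \;=\; (r_2 + 1)^2 (r_3 + 1)^2.
\end{equation*}

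Granted this reformulation, fix $r_2 \ge 0$ and consider, on the interval $r_3 \ge r_2$, the function
\begin{equation*}
\psi(r_3) \;=\; \frac{(r_3 - r_2)^3 \bigl(r_3^2 + r_2 r_3 + r_2^2 + 2 r_3 + 2 r_2 + 1\bigr)}{(r_3 + 1)^2}.
\end{equation*}
Writing $\psi = A \cdot B$ with $A(r_3) = (r_3 - r_2)^3/(r_3+1)^2$ and $B(r_3) = r_3^2 + r_2 r_3 + r_2^2 + 2 r_3 + 2 r_2 + 1$, the quotient rule gives
\begin{equation*}
A'(r_3) \;=\; \frac{(r_3 - r_2)^2 (r_3 + 2 r_2 + 3)}{(r_3 + 1)^3},
\end{equation*}
which is strictly positive for $r_3 > r_2 \ge 0$. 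Since $B$ is manifestly positive and strictly increasing in $r_3 \ge 0$, the product $\psi = A \cdot B$ is strictly increasing on $[r_2, \infty)$, with $\psi(r_2) = 0$ and $\psi(r_3) \to \infty$. By the intermediate value theorem, $\psi(r_3) = (r_2 + 1)^2$ has exactly one solution in $(r_2, \infty)$, and I take this solution as $f(r_2)$.

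For the boundary value, I set $r_2 = 0$ in the factored equation. The left-hand side collapses to $r_3^3 (r_3 + 1)^2$ and the right-hand side to $(r_3 + 1)^2$, so $f(0)$ is the unique positive solution of $r_3^3 = 1$, namely $r_3 = 1$.

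The main obstacle is the initial algebraic simplification: one has to verify that $m_3$ really does cancel and that the resulting expression rearranges into the advertised factored form. Once that clean form is in hand, the monotonicity of $\psi$ and the evaluation at $r_2 = 0$ are almost immediate, and as a bonus one automatically obtains $f(r_2) > r_2$, consistent with the physical ordering assumed in Equation (\ref{systemE}).
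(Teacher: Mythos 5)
Your proposal is correct, and the factored form you guessed is in fact exact: substituting (\ref{m1}) and (\ref{m2}) into (\ref{inieqs3}) and clearing the denominator $(r_3+1)^2(r_2-r_3)^2$ does cancel $m_3$ and yields precisely
$p(r_2,r_3)=(r_3-r_2)^2\bigl[r_3(r_3+1)^2-r_2(r_2+1)^2\bigr]-(r_2+1)^2(r_3+1)^2$,
which together with your cubic-difference identity gives your displayed equation (I checked this against the expanded polynomial (\ref{poly}) term by term). Your route is genuinely different from the paper's. The paper computes $\partial p/\partial r_3$ and asserts that the system $p=0=\partial p/\partial r_3$ has no real solution with $r_2\ge 0$; this is an implicit-function-theorem argument that excludes vertical tangents and singular points of the level curve, but it is only a local graph statement (a global conclusion still needs a connectedness or root-counting step), and the nonexistence claim for the polynomial system is left unproved, presumably to a computer-algebra check. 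Your monotonicity argument for $\psi$ delivers global existence and uniqueness on $(r_2,\infty)$ in one stroke, is fully human-checkable, and yields $f(r_2)>r_2$ as a byproduct. The one point you should make explicit rather than implicit is the restriction to $r_3\ge r_2$: for $0\le r_3<r_2$ the left-hand side $(r_3-r_2)^3 B$ of your factored equation is $\le 0$ while the right-hand side $(r_2+1)^2(r_3+1)^2$ is positive, so no roots exist there and uniqueness holds among all $r_3\ge 0$, which is what the lemma needs. With that one sentence added, your proof is complete and, in my view, tighter than the one in the paper.
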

\begin{proof} This result follows from the fact that 

\begin{eqnarray*}
\frac{\partial p}{\partial r_3}&=&2 r_2^4-2 r_2^3 r_3+4 r_2^3+3 r_2^2 r_3^2-2 r_2^2 r_3+r_2^2-8 r_2 r_3^3-12 r_2 r_3^2\\
& &-10 r_2 r_3-4 r_2+5 r_3^4+8 r_3^3+3 r_3^2-2 r_3-2 
\end{eqnarray*}
and the system $p(r_2,r_3)=0=\frac{\partial p}{\partial r_3}$ has no real solution with $r_2\ge0$.
%The proof of this fact is in the file Lagrange Points.nb Located in the folder Planar Circular solutions

\end{proof}

\begin{figure}[h]
\centerline
{\includegraphics[scale=0.59]{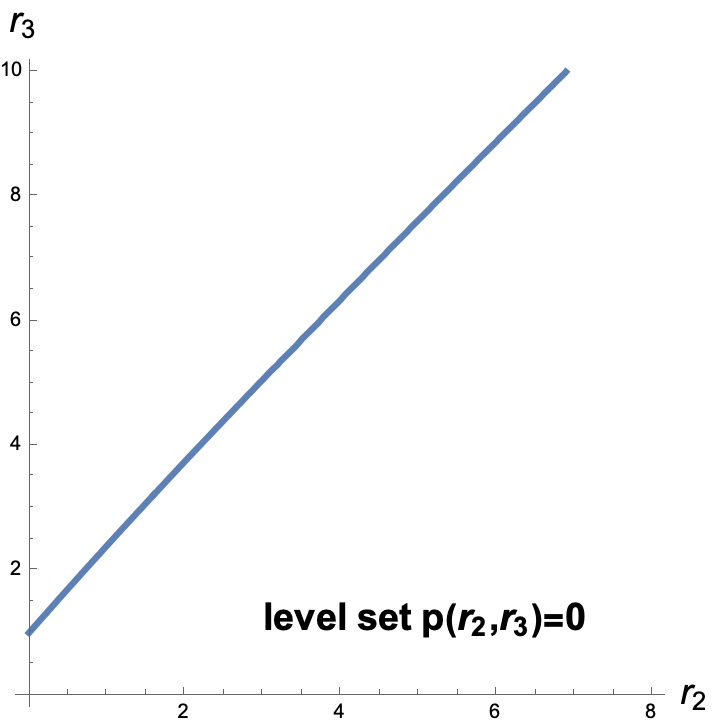}}
\caption{Relation between the radii $r_2$ and $r_3$.}
\label{levelsetp}
\end{figure}

\begin{prop}\label{ES}
For any $r_2\ge0$ and any $m_3$ between $0$ and $(1+f(r_2))^2$, there exists a unique solution of the 3-body problem with masses $m_1$, $m_2$ and $m_3$ and positions given by 
Equations (\ref{systemE}). The masses $m_1$ and $m_2$ are determined by $m_3$. See Equations (\ref{m1}) and (\ref{m2}). We call this solution of the 3-body problem $ES(r_2,m_3)$.
\end{prop}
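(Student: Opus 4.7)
The plan is to assemble the calculations already carried out in this section. Since the ansatz (\ref{systemE}) places all three bodies along the common rotating direction $(\cos t,\sin t)$ and satisfies $\ddot X_i=-X_i$, substituting into (\ref{enbp}) and cancelling the common unit vector reduces the six second-order ODEs to the three scalar algebraic equations (\ref{inieqs1})--(\ref{inieqs3}). Thus a solution of the 3-body problem of the prescribed form is equivalent to a quintuple $(r_2,r_3,m_1,m_2,m_3)$, with $0<r_2<r_3$ and non-negative masses, satisfying those three equations.

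First I would regard $r_2$ and $m_3$ as parameters and view (\ref{inieqs1})--(\ref{inieqs2}) as a linear system in the two unknowns $m_1,m_2$. Because the system is triangular, it has the unique explicit solution (\ref{m1})--(\ref{m2}). Substituting these expressions into (\ref{inieqs3}) and clearing common denominators produces, after simplification, exactly the polynomial identity $p(r_2,r_3)=0$ of (\ref{poly}); notably $m_3$ drops out of this substitution, so the constraint between $r_2$ and $r_3$ is independent of $m_3$. Lemma \ref{df} then furnishes the unique root $r_3=f(r_2)$ for each $r_2\ge 0$, and stringing together these unique resolutions establishes uniqueness of the candidate quintuple.

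The remaining task, which is where the upper bound $m_3\le(1+f(r_2))^2$ is used, is to verify the non-negativity of the constructed masses. From (\ref{m1}) we get $m_1>0$ automatically, since $m_3\ge 0$ and $r_2\ge 0$ force $m_3+r_2(r_2-r_3)^2\ge 0$, while the prefactor $(r_2+1)^2/(r_2-r_3)^2$ is strictly positive because $r_3=f(r_2)\ne r_2$. From (\ref{m2}) we get $m_2\ge 0$ precisely when $m_3\le(r_3+1)^2=(1+f(r_2))^2$, which is exactly the range stated in the proposition (with the endpoints corresponding to restricted subcases where one of the two dependent masses vanishes). This positivity check is the only genuine (and mild) obstacle in the argument; with it in hand the construction produces a well-defined unique solution $ES(r_2,m_3)$.
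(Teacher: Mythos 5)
Your proposal is correct and follows essentially the same route as the paper: the ansatz reduces the ODEs to (\ref{inieqs1})--(\ref{inieqs3}), the first two determine $m_1,m_2$ uniquely via (\ref{m1})--(\ref{m2}), substitution into (\ref{inieqs3}) yields the $m_3$-independent constraint $p(r_2,r_3)=0$ resolved by Lemma \ref{df}, and the bound $0\le m_3\le(1+f(r_2))^2$ is exactly the non-negativity condition on $m_2$. Your explicit observation that $m_3$ drops out of the substitution into (\ref{inieqs3}) is a point the paper leaves implicit, but the argument is the same.
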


\begin{rem}\label{remLagrangePoints}
All the computations that we have done allow $m_2$ or $m_3$ to be zero. Therefore, we can use the relation $p(r_2,r_3)=0$ to compute the Lagrange points $L_1$, $L_2$ and $L_3$. We will explain this in Section \ref{cL}.
\end{rem}

\begin{rem}
For circular solutions of the 2-body problem, we have that the ratio of the radii is determined by the ratio of the masses. We lose this property for circular solutions of the 3-body problem. Every triple  $(1,r_2,r_3)$ with $r_3=f(r_2)$ has a one-parametric family of masses that can be placed in the positions given by this triple. For example, approximating to 11 significant digits, we have that the solution associated with

$$(r_1,r_2,r_3)=(1.00000000000, 2.00000000000, 3.74714216664) $$

admits the masses 

$$m_1=20.9483973941, \, m_2= 8.60062761371, \, m_3=1.00000000000$$

and the masses

$$m_1=26.8451921822, \,m_2= 7.80188284113,\,m_3=  3.00000000000$$

See Figure \ref{twosetofmasses}

%The image below is in the file the general 3 colinear body problem
\begin{figure}[h]
\centerline
{\includegraphics[scale=0.59]{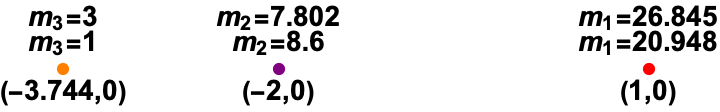}}
\caption{Two sets of masses that can be placed at the same points.}
\label{twosetofmasses}
\end{figure}
\end{rem}

\section{Lagrange points}\label{cL}

As pointed out in the previous section, we can deduce Lagrange points $L_1$, $L_2$ and $L_3$ from the Euler solutions. We have the following Lemma,

\begin{lem}
Let $x>0$ and let us assume that two bodies with masses $m_1$ and $m_2$ and positions  $(\cos(t),\sin(t))$ and $(-x \cos(t),-x\sin(t))$ respectively satisfy the 2-body problem.  If we define $L_1(x)\dots L_5(x)$ by the following relations

\begin{eqnarray*}
(L_1(x),0) & &\text{is the  coordinate of the Lagrange point between the two primaries}\\
(L_2(x),0) & & \text{ is the coordinate of the Lagrange point with $L_2(x)>1$}\\
(L_3(x),0)  & &\text{ is the coordinate of the Lagrange point with $L_3(x)<-x$}\\
L_4(x) \, \text{ and }\,  L_5(x) & & \text{are the  coordinates of the non-collinear Lagrange}
\end{eqnarray*}

and $f(x)$ is the function defined in Lemma \ref{df}, then we have that:

\begin{eqnarray}
L_1(x) & =&\begin{cases}-f^{-1}(x) &x\ge1\\ x f^{-1}(1/x)& x<1 \end{cases}\label{L1}\\
L_2(x) &= & xf(1/x)\label{L2}\\
L_3(x)  &= &-f(x)\label{L3}\\
L_4(x)  &= &(\frac{1-x}{2},\frac{1-x}{2}\, \sqrt{3})\label{L4}\\
L_5(x)  &= &(\frac{1-x}{2},-\frac{1-x}{2}\, \sqrt{3})\label{L5}
\end{eqnarray}
\end{lem}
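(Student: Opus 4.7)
The plan is to read off $L_3(x)$ and $L_1(x)$ (in a restricted range of $x$) directly from Euler's classification by letting one of the three masses vanish, then to propagate these identities over the full parameter range via a rescaling-reflection symmetry of the 2-body problem, and finally to recover $L_4$ and $L_5$ from the classical equilateral-triangle central configuration.

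For the base cases, following Remark~\ref{remLagrangePoints}, I work inside Euler's family $X_1=(1,0)$, $X_2=(-r_2,0)$, $X_3=(-r_3,0)$ with $r_3=f(r_2)$, and exploit the degenerate cases permitted by Proposition~\ref{ES}. Setting $m_3=0$ leaves a 2-body system at $(1,0)$ and $(-r_2,0)$ together with a massless body at $-r_3<-r_2$, which is the Lagrange point to the left of the left primary; identifying $x=r_2$ yields $L_3(x)=-f(x)$, establishing \eqref{L3}. Setting $m_2=0$ instead leaves primaries at $(1,0)$ and $(-r_3,0)$ together with a massless body at $-r_2\in(-r_3,0)$, i.e.\ the Lagrange point strictly between the two primaries; identifying $x=r_3$ gives $L_1(x)=-f^{-1}(x)$. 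Since $\mathrm{range}(f)=[1,\infty)$, this is valid exactly for $x\ge 1$, yielding the top branch of \eqref{L1}.

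To reach the remaining collinear points, I use the involution $T(y)=-y/x$. A direct computation—using that unit angular velocity with primaries at $1,-x$ forces $m_1=x(1+x)^2$ and $m_2=(1+x)^2$—shows that $T$, combined with the mass rescaling $m_j\mapsto m_j/x^3$, conjugates the 2-body problem with parameter $x$ to the 2-body problem with parameter $1/x$. Hence $T$ sends Lagrange points to Lagrange points; by tracking the three collinear positions one finds $T(L_1(x))=L_1(1/x)$, $T(L_2(x))=L_3(1/x)$, and $T(L_3(x))=L_2(1/x)$. Feeding the base $L_3$ formula into the second identity produces $L_2(x)=xf(1/x)$ for every $x>0$, which is \eqref{L2}; feeding the base $L_1$ formula (at argument $1/x\ge 1$) into the first produces $L_1(x)=xf^{-1}(1/x)$ for $x<1$, which is the bottom branch of \eqref{L1}.

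Finally, \eqref{L4}--\eqref{L5} follow from the classical fact that any equilateral-triangle configuration of three bodies rotating about their common center of mass is a central configuration, independent of the masses; the stated coordinates are then the two vertices of the equilateral triangle built on the segment from $(1,0)$ to $(-x,0)$. The main technical obstacle is making the rescaling-reflection step precise: one must track the mass rescaling so that $T$ is a genuine symmetry of the Newtonian system, and verify that the induced action on $\{L_1,L_2,L_3\}$ fixes $L_1$ while swapping $L_2\leftrightarrow L_3$. Once this symmetry is pinned down, the three collinear formulas collapse to the two base evaluations inside Euler's family.
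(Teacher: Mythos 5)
Your proof is correct and follows essentially the same route as the paper: both read off $L_3$ and the $x\ge1$ branch of $L_1$ from the degenerate Euler solutions with $m_3=0$ and $m_2=0$, obtain $L_2$ and the $x<1$ branch of $L_1$ by the reflect-and-dilate map $y\mapsto -y/x$ conjugating the parameter-$x$ problem to the parameter-$1/x$ problem, and get $L_4$, $L_5$ from the equilateral-triangle configuration. Your version is in fact slightly more careful than the paper's, since you make explicit the mass rescaling $m_j\mapsto m_j/x^3$ that the paper's ``change the orientation and dilate by $x$'' step leaves implicit.
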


\begin{proof}
We will use Proposition \ref{ES} in this proof. The expressions for $L_4$ and $L_5$ follow because these points and the primaries form equilateral triangles. To deduce the expression for  $L_1$ we notice that if $x\ge 1$, $ES(r_2,m_3)=ES(f^{-1}(x),(1+x)^2)$ is a solution with $m_2=0$, and $r_3=x$. Therefore $r_2= f^{-1}(x)$ must be a Lagrange point. Since this Lagrange point is between the two primaries,  then $L_1(x)=f^{-1}(x)$ when $x\ge1$. When $0<x<1$ we have that $ES(r_2,m_3)=ES(f^{-1}(\frac{1}{x}),(1+\frac{1}{x})^2)$ is a solution with $m_2=0$, and $r_3=\frac{1}{x}$. By changing the orientation of the first axis and by dilating by $x$ (by changing the units of distance) we get that $(1,0)$, $(x f^{-1}(\frac{1}{x}),0)$ and $(-x,0)$ are initial positions for the collinear circular solution of the 3-body problem. Since the position of the body with mass zero is $(x f^{-1}(\frac{1}{x}),0)$  and this point is between the two primaries, then if  $0<x<1$, $L_1(x)=x f^{-1}(\frac{1}{x})$.  We use a similar argument to deduce that $L_2(x)= xf(\frac{1}{x})$. We have that $ES(r_2,m_3)=ES(\frac{1}{x},0)$ is a solution with $m_3=0$, and $r_3=f(\frac{1}{x})$.
 By changing the orientation of the first axis and by dilating by $x$ (by changing the units of distance) we get that $(xf(1/x),0)$, $(1,0)$ and $(-x,0)$ are initial positions for the collinear circular solution of the 3-body problem. Since the position of the body with mass zero is $(xf(1/x),0)$  and this point is to the right of the primary located at $(1,0)$, then $L_2(x)=x f(\frac{1}{x})$. 
$L_3(x)= -f(x)$, because $ES(r_2,m_3)=ES(x,0)$ is a solution with $m_3=0$, and $r_3=f(x)$, therefore $(-f(x),0)$ is a Lagrange point with the primaries located a $(-x,0)$ and $(1,0)$.

\end{proof}

The previous result states that we can compute the Lagrange points $L_1$, $L_2$ and $L_3$ by understanding the curve 

$$r_2\longrightarrow (r_2,f(r_2))$$

The following theorem tells us that we can parametrize this curve using rational functions.

\begin{thm} There exists a function $G(w)$ in terms of radicals such that

$$w\longrightarrow(G(w),G(w)+w+1)\quad \text{with $w\ge0$}$$

produces all the points of the form $(r_2,r_3)$ with $r_2\ge0$ and $p(r_2,r_3)=0$.

\end{thm}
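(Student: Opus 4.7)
The plan is to reveal a hidden structure of $p$ so that the substitution $r_3=r_2+w+1$ turns the quintic equation $p(r_2,r_3)=0$ into a quartic in $r_2$; quartics being solvable by radicals, this yields $G(w)$ in closed form. The starting point is to rewrite $p$. Going back to the derivation of (\ref{poly}), one substitutes (\ref{m1}) and (\ref{m2}) into (\ref{inieqs3}) and observes that the $m_3$ contributions cancel identically. Clearing denominators yields
\begin{equation*}
r_3(r_3+1)^2(r_3-r_2)^2 = r_2(r_2+1)^2(r_3-r_2)^2 + (r_2+1)^2(r_3+1)^2,
\end{equation*}
and invoking the algebraic identity $r_3(r_3+1)^2-r_2(r_2+1)^2=(r_3-r_2)(r_2^2+r_2 r_3+r_3^2+2r_2+2r_3+1)$ rewrites this as
\begin{equation*}
p(r_2,r_3)=(r_3-r_2)^3\bigl(r_2^2+r_2 r_3+r_3^2+2r_2+2r_3+1\bigr)-(r_2+1)^2(r_3+1)^2.
\end{equation*}

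Next, I apply the substitution $r_3=r_2+w+1$. The factor $(r_3-r_2)^3$ becomes the constant $(w+1)^3$ in $r_2$, the bracket simplifies to $3r_2^2+(3w+7)r_2+(w+2)^2$, and $(r_2+1)(r_3+1)=r_2^2+(w+3)r_2+(w+2)$. So $p=0$ is equivalent to
\begin{equation*}
\bigl(r_2^2+(w+3)r_2+(w+2)\bigr)^2 = (w+1)^3\bigl(3r_2^2+(3w+7)r_2+(w+2)^2\bigr),
\end{equation*}
a quartic in $r_2$ whose coefficients are polynomials in $w$; the $r_2^5$ terms drop out precisely because $(r_3-r_2)^3$ has become constant. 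The Ferrari-Cardano formulas then express the four roots $r_2=G_i(w)$ in radicals of $w$.

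To finish, one selects the correct branch and verifies coverage. By Lemma \ref{df}, the set $\{(r_2,r_3):r_2\ge 0,\,p=0\}$ is the graph of $r_3=f(r_2)$, so setting $w(r_2):=f(r_2)-r_2-1$ one has $w(0)=0$, and a direct analysis of $p=0$ as $r_2\to\infty$ (which forces $f(r_2)-r_2$ to grow without bound) gives $w(r_2)\to\infty$; monotonicity of $w$ in $r_2$ follows from implicit differentiation, using $\partial p/\partial r_3\ne 0$ from Lemma \ref{df}. Thus $r_2\mapsto w$ is a bijection from $[0,\infty)$ onto itself, and exactly one of the $G_i$ is its inverse; this is our $G$. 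The main obstacle is uncovering the form of $p$ that produces the cancellation of $r_2^5$ after substitution; once that is in hand, the rest is the classical solvability of quartics by radicals plus a routine branch identification.
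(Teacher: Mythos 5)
Your proposal is correct and follows essentially the same route as the paper: the substitution $r_3=r_2+w+1$ collapses $p=0$ to a quartic in $r_2$ (your displayed quartic expands exactly to the one the paper writes down), which is then solvable by radicals, with the branch singled out by $G(0)=0$. Your factorization $p=(r_3-r_2)^3\bigl(r_2^2+r_2r_3+r_3^2+2r_2+2r_3+1\bigr)-(r_2+1)^2(r_3+1)^2$ is a nice addition that explains the degree drop the paper only asserts as a direct verification, and your coverage discussion is more careful than the paper's, though the monotonicity of $w(r_2)=f(r_2)-r_2-1$ needs slightly more than citing $\partial p/\partial r_3\ne 0$ (that gives smoothness of $f$, not $f'>1$).
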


\begin{proof}
A direct verification shows that if we change $r_3=w+u+1$ and  $r_2=u  $, the Equation $p(r_2,r_3)=0$ reduces to 

\begin{eqnarray*}
0&=&-u^4-2 u^3 w-6 u^3+3 u^2 w^3+8 u^2 w^2+u^2 w-10 u^2+3 u w^4+\\
& &16 u w^3+28 u w^2+14 u w-5 u+w^5+7 w^4+19 w^3+24 w^2+12 w
\end{eqnarray*}

Since this is a polynomial equation of degree 4 in $u$, for one of the solutions we have that
$r_2=u=G(w)$. This solution is the branch that satisfies $G(0)=0$. Since $r_2=u=G(w)$, then $r_3=w+G(w)+1$.
\end{proof}

%Lagrange points of the Euler's solutions

\section{Euler-Lagrange points}\label{LeP}

We consider solutions of the four body problem of the form

\begin{eqnarray*}
X_1&=&(\cos t,\sin t)\\
X_2&=&-r_2 (\cos t,\sin t)\\
X_3&=&-r_3 (\cos t,\sin t)\\
X_4&=&r_4 (\cos t,\sin t)+r_5 (-\sin t,\cos t)
\end{eqnarray*}

where the body with position $X_i$ has mass $m_i$ and we assume that $m_4=0$. Since the fourth body has mass zero, then adding this additional body does not affect the motion of the other three bodies. A solution of the new system is an addition of an Euler solution $ES(r_2,m_3)$. In particular, we still have that the 
$r_3$ can be written as a function of $r_2$, see Figure \ref{levelsetp}, and 
 $m_1$ and $m_2$ can be written in terms of $m_3$, see Equations (\ref{m1}) and (\ref{m2}), where $m_3$ has the following range:

\begin{eqnarray}\label{m3}
0\le m_3 \le (1+r_3)^2
\end{eqnarray}

Somehow we already have been using the following definition:

\begin{mydef}\label{E-L}
If $X_1(t),\dots,X_4(t)$ is a solution of the $4$-body problem with  the $X_i$'s given above, then we say that $(r_4,r_5)$ is an Euler-Lagrange point of the solution $ES(r_2,m_3)$.
\end{mydef}

%End of defintion

A direct computation shows that the equation $\ddot{X}_4=\sum_{i=1}^3 m_i \frac{X_i-X_4}{|X_i-X_4|^3}$ reduces to the following two equations 
\begin{eqnarray}
f_1(r_2,r_3,r_4,r_5,m_1,m_2,m_3)&=&0\label{eq7}\\
f_2(r_2,r_3,r_4,r_5,m_1,m_2,m_3)&=&0\label{eq8}
\end{eqnarray}
with
{\small
\begin{eqnarray*}
f_1&=&\frac{m_1r_4}{\left(r_4^2-2 r_4+r_5^2+1\right)^{3/2}}+\frac{m_2r_4}{\left(r_2^2+2 r_2 r_4+r_4^2+r_5^2\right)^{3/2}}+\frac{m_3r_4}{\left(r_3^2+2 r_3 r_4+r_4^2+r_5^2\right)^{3/2}} \\
& &-r_4-\frac{m_1}{\left(r_4^2-2 r_4+r_5^2+1\right)^{3/2}}+\frac{m_2 r_2}{\left(r_2^2+2 r_2 r_4+r_4^2+r_5^2\right)^{3/2}}+\frac{m_3 r_3}{\left(r_3^2+2 r_3 r_4+r_4^2+r_5^2\right)^{3/2}}
\end{eqnarray*}
}

and
{\small
\begin{eqnarray*}
f_2&=&r_5 \left(-\frac{m_1}{\left(r_4^2-2 r_4+r_5^2+1\right)^{3/2}}-\frac{m_2}{\left(r_2^2+2 r_2 r_4+r_4^2+r_5^2\right)^{3/2}}-\frac{m_3}{\left(r_3^2+2 r_3 r_4+r_4^2+r_5^2\right)^{3/2}}+1\right)
\end{eqnarray*}
}

%The case $r_2=0$ and $r_3=1$

We divide the study of the system $f_1=0=f_2$ into two parts.

\subsection{The case $r_2=0$} In this section we will consider the case when one of the bodies stays fixed in the center of mass. This case was considered by Llibre in \cite{L1} in the particular case where all the masses are equal. From Equations (\ref{m1}) and (\ref{m2}) we have that 

\begin{itemize}
\item
when $m_3$ approaches zero then $m_1$ approaches zero as well and $m_2$ approaches 1. See Figure \ref{m1m3near0}
\item
 when $m_3$ approaches $(1+r_3)^2=4$, $m_1$ approaches 4 as well, and $m_2$ approaches 0. In this case we expect the Euler-Lagrange points to approach the  Lagrange points when the masses of the two primaries are the same.  See Figure \ref{m1m3near4}
 
 \item
 when $m_3=4/5$, then $m_1=m_2=m_3=4/5$. In this case the Lagrange points of Euler's solution must agree with those found by Llibre in \cite{L1}.

\end{itemize}

\begin{figure}[h]

\includegraphics[scale=0.59]{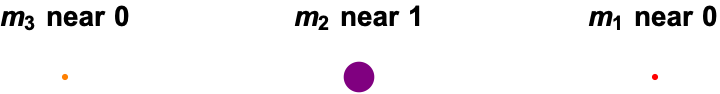}
\caption{This is one of the extreme cases, $m_3\to 0$.}
\label{m1m3near0}
\end{figure}

\begin{figure}[h]
\vskip.2cm\includegraphics[scale=0.59]{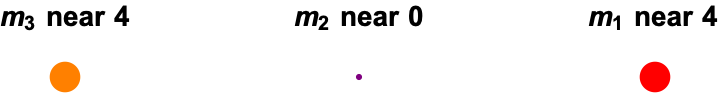}
\caption{This is the other extreme case, $m_3\to 4$.}
\label{m1m3near4}
\end{figure}

\begin{figure}[h]
\vskip.2cm\includegraphics[scale=0.59]{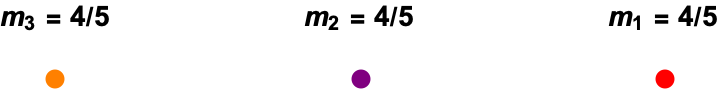}
\caption{All masses are the same. This case was previously studied.}
\label{m1eqm2eqm3}
\end{figure}

We need to solve Equations \ref{eq7} and \ref{eq8}.

%Subcase $r_5=0$

\subsubsection{Subcase $r_5=0$}  Equation (\ref{eq8}) holds true because $r_5$ is a factor of the function  $f_2$. Replacing the expressions for $m_1$ and $m_2$ in terms of $m_3$ in the Equation \ref{eq7}, we obtain that 
\begin{itemize}
\item
When $r_4>1$

$$ m_3=\frac{4 (r_4-1)^3 (r_4+1)^2 \left(r_4^2+r_4+1\right)}{7 r_4^4+10 r_4^2-1}=g_1(r_4)$$

We are only interested in values of $r_4$ for which $0<g_1(r_4)<4$. Figure \ref{graphg1} shows this graph. Notice that since $f(1)=2.3968...$, then $(2.3968...,0)$ is one of the Lagrange points when we only have two bodies with the same mass located at $(-1,0)$ and $(1,0)$. This is, $L_2(1)=2.3968...$ using Equation \ref{L2}.

\begin{figure}[h]
\vskip.2cm\includegraphics[scale=0.59]{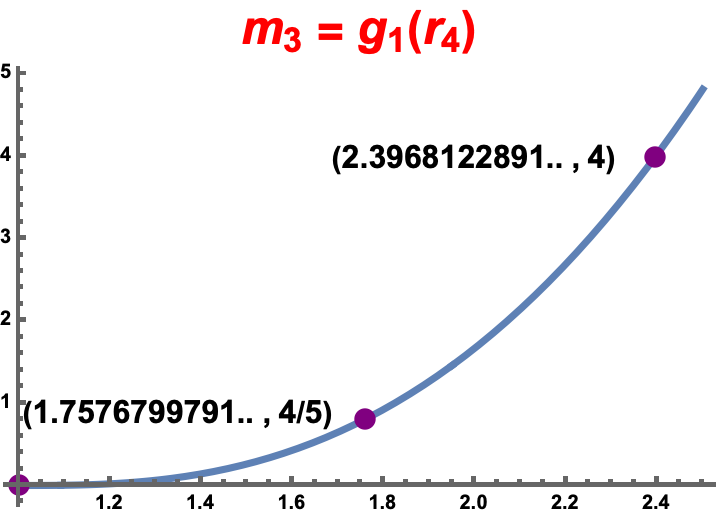}
\caption{Every value of $r_4$ between 1 and $2.3968122...$ is an Euler-Lagrange point if we take $m_3=g_1(r_4)$. The point $(1.7576...,4/5)$ is the one found in \cite{L1}.}
\label{graphg1}
\end{figure}

\item
When $0<r_4<1$

$$m_3= -\frac{4 (r_4-1)^3 (r_4+1)^2 \left(r_4^2+r_4+1\right)}{r_4^4+16 r_4^3-2 r_4^2+1} = g_2(r_4)$$

As expected, $m_3$ takes values between $0$ and $4$. Figure \ref{graphg2} shows the graph of this function.

\begin{figure}[h]
\vskip.2cm\includegraphics[scale=0.59]{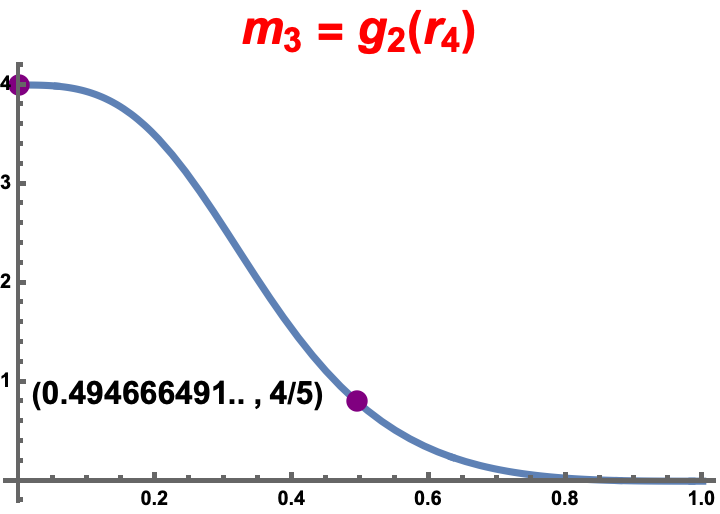}
\caption{Every value of $r_4$ between 0 and $1$ is an Euler-Lagrange point if we take $m_3=g_2(r_4)$. The point $(0.494666491...,4/5)$ is the one found in \cite{L1}.}
\label{graphg2}
\end{figure}

\item
When $-1<r_4<0$

$$ m_3=\frac{4 (r_4-1) (r_4+1) \left(r_4^2-1\right) \left(r_4^3+1\right)}{r_4^4-16 r_4^3-2 r_4^2+1}=g_3(r_4)=g_2(-r_4)$$

The graph of this function is just the reflection about the $m_3$ axis of the graph $m_3=g_2(r_4)$.

\item
When $r_4<-1$

$$m_3= -\frac{4 (r_4-1) (r_4+1) \left(r_4^2-1\right) \left(r_4^3+1\right)}{7 r_4^4+10 r_4^2-1}=g_4(r_4)=g_1(-r_4)$$

The graph of this function is just the reflection about the $m_3$ axis of the graph $m_3=g_1(r_4)$.
\end{itemize} 

\subsubsection{Subcase $r_5\ne0$}  We can check that, after replacing the expression for $m_1$ and $m_2$ in terms of $m_3$, the equation  $f_2(m_1,m_2,m_3,r_2=0,r_3=1,r_4,r_5)=0$ reduces to the equation 

\begin{eqnarray}\label{eq8r5d0}
r_5h(r_4,r_5,m_3)=0
\end{eqnarray}

where

$$h(r_4,r_5,m_3)=\frac{m_3-4}{4 \left(r_4^2+r_5^2\right)^{3/2}}-\frac{m_3}{\left(r_4^2+2 r_4+r_5^2+1\right)^{3/2}}-\frac{m_3}{\left(r_4^2-2 r_4+r_5^2+1\right)^{3/2}}+1$$

and the equation $f_1(m_1,m_2,m_3,r_2=0,r_3=1,r_4,r_5)=0$ reduces to the equation

\begin{eqnarray}\label{eq7r5d0}
\frac{m_3}{\left(r_4^2+2 r_4+r_5^2+1\right)^{3/2}}-\frac{m_3}{\left(r_4^2-2 r_4+r_5^2+1\right)^{3/2}}-r_4 h(r_4,r_5,m_3)=0
\end{eqnarray}

Combining Equations (\ref{eq8r5d0}) and (\ref{eq7r5d0}) we get that 

$$\frac{m_3}{\left(r_4^2+2 r_4+r_5^2+1\right)^{3/2}}-\frac{m_3}{\left(r_4^2-2 r_4+r_5^2+1\right)^{3/2}}=0$$

which implies that $r_4=0$. Then we must have that $h(r_4=0,r_5,m_3)$ must vanish. A direct computation shows that
 $h(r_4=0,r_5,m_3)$  reduces to
 
 $$\frac{m_3-4}{4 \left(r_5^2\right)^{3/2}}-\frac{2 m_3}{\left(r_5^2+1\right)^{3/2}}+1=0$$
 
 We get that any time $(0,r_5)$ is a Lagrange point, then $(0,-r_5)$ is also a Lagrange point. Assuming that $r_5>0$ we get from the equation above that 
 
 $$m_3=\frac{4 \left(r_5^2+1\right)^{3/2} \left(1-r_5^3\right)}{\left(r_5^2+1\right)^{3/2}-8 r_5^3}=g_5(r_5)$$ 

Since $m_3$ can only take values between $0$ and $4$, then $r_5$ can only take values between $1$ and $\sqrt{3}$. Figure \ref{graphg5} shows the graph of the function $g_5$

\begin{figure}[h]
\vskip.2cm\includegraphics[scale=0.59]{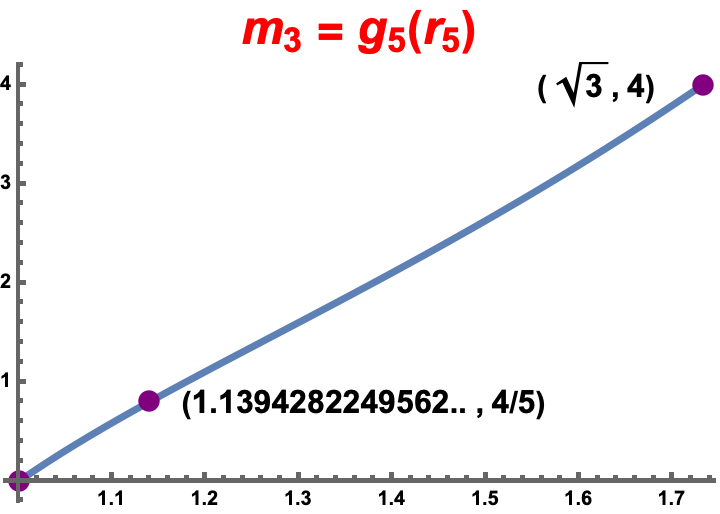}
\caption{Every point of the form $(0,\pm r_5)$ with $r_5$ between 1 and $\sqrt{3}$ is a Euler-Lagrange point if we take $m_3=g_5(r_5)$. The point $(1.1394282249562...,4/5)$ is the one found in \cite{L1}}
\label{graphg5}
\end{figure}

We summarize the computations above in the following theorem.

\begin{thm}
Each Euler's solution with $r_2=0$, in other words, each $ES(0,m_3)$, has six Euler-Lagrange points of the form

$$(\pm x_1,0),\quad (\pm x_2,0), \quad (0,\pm x_3)$$

with $x_1\in (0,1)$, $x_2\in (1,f(1))=(1,L_2(1))$ and $x_3\in (1,\sqrt{3})$. We are using the function $f(x)$ defined in Lemma \ref{df} and the function $L_2$ defined in (\ref{L2}). Moreover, for any $m_3\in (0,4)$ we have that $x_1=g_1^{-1}(m_3)$, $x_2=g_2^{-1}(m_3)$ and $x_3=g_5^{-1}(m_3)$.

\end{thm}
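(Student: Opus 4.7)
The plan is to collect the case analysis carried out just above the theorem into a clean proof, verifying that each of the six points listed is indeed an Euler-Lagrange point and that no others exist. First, because we are in the case $r_2=0$, Lemma \ref{df} forces $r_3=f(0)=1$, so the system $f_1=0=f_2$ reduces to the two-variable system in $(r_4,r_5)$ written out in Equations (\ref{eq8r5d0}) and (\ref{eq7r5d0}) after substituting the expressions for $m_1,m_2$ from (\ref{m1})--(\ref{m2}) at $r_2=0$, $r_3=1$.

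I would then split into the two subcases $r_5=0$ and $r_5\ne 0$ already treated in the text. For $r_5=0$, equation $f_2=0$ is automatic since $r_5$ factors out of $f_2$, and $f_1=0$ becomes a single algebraic equation in $r_4$ and $m_3$. This equation is rational, with singularities only at $r_4\in\{-1,0,1\}$ (the positions of the primaries). I would solve it for $m_3$ on each of the four open intervals determined by these singularities, obtaining the four functions $g_1,g_2,g_3,g_4$ displayed in the text, together with the symmetry $g_3(r_4)=g_2(-r_4)$ and $g_4(r_4)=g_1(-r_4)$. A short monotonicity check (inspecting the derivative, or equivalently the graphs in Figures \ref{graphg1} and \ref{graphg2}) shows that $g_1:(1,L_2(1))\to(0,4)$ and $g_2:(0,1)\to(0,4)$ are strictly monotone bijections, yielding the unique values $x_2=g_1^{-1}(m_3)\in(1,L_2(1))$ and $x_1=g_2^{-1}(m_3)\in(0,1)$, along with their reflections $-x_1$ and $-x_2$ from $g_3^{-1}$ and $g_4^{-1}$. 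This accounts for four collinear Euler-Lagrange points.

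For $r_5\ne 0$, the calculation reproduced in the text shows that combining (\ref{eq8r5d0}) and (\ref{eq7r5d0}) forces
\[
\frac{m_3}{(r_4^2+2r_4+r_5^2+1)^{3/2}}=\frac{m_3}{(r_4^2-2r_4+r_5^2+1)^{3/2}},
\]
which, since $m_3>0$, gives $r_4=0$. Substituting $r_4=0$ into $h=0$ reduces to a single equation in $r_5$ that can be solved for $m_3=g_5(r_5)$ as in the text. A direct check that $g_5$ is strictly monotone on $(1,\sqrt 3)$ (the endpoints correspond to $m_3=4$ and $m_3=0$) shows that $x_3=g_5^{-1}(m_3)$ is uniquely determined in $(1,\sqrt 3)$, and the invariance of the system under $r_5\mapsto -r_5$ immediately yields the companion point $(0,-x_3)$.

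The main obstacle is the monotonicity/range verification for the three functions $g_1$, $g_2$, and $g_5$: one must confirm that as $m_3$ varies over $(0,4)$ each inverse $g_i^{-1}(m_3)$ is well-defined and lies in the claimed open interval, and that the endpoint behaviour matches the limits described before the theorem (namely that as $m_3\to 0$ one recovers the Lagrange points for a single primary at $(1,0)$, and as $m_3\to 4$ one recovers the Lagrange points of two equal masses at $(\pm 1,0)$, in particular $L_2(1)=f(1)$). These are one-variable calculus checks on explicit rational functions, so they are routine but need to be carried out to justify the ``six'' in the statement and to certify that no Euler-Lagrange points have been missed.
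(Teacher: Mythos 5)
Your proposal follows essentially the same route as the paper: the theorem is stated there as a summary of the immediately preceding case analysis ($r_5=0$ split over the four intervals determined by the primaries at $-1,0,1$, giving $g_1,\dots,g_4$ with the reflection symmetries, and $r_5\ne0$ forcing $r_4=0$ and $m_3=g_5(r_5)$), and your only addition is to make explicit the monotonicity/range checks that the paper justifies by inspection of the graphs. Note that your assignment $x_1=g_2^{-1}(m_3)$, $x_2=g_1^{-1}(m_3)$ is the one consistent with the domains of $g_1$ and $g_2$; the indices as printed in the theorem statement appear to be transposed.
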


\subsection{The case $r_2>0$ } In this subsection we consider the Euler-Lagrange points for solutions where all the primaries move on circles. None of the bodies stays fixed.
\subsubsection{Subcase $r_2>0$ and $r_5=0$}
Equation (\ref{eq8}) holds true because $r_5$ is a factor of the function  $f_2$. Replacing the expressions for $m_1$ and $m_2$ in terms of $m_3$ in the Equation \ref{eq7}, we obtain that:

\begin{itemize}
\item
When $r_4<-r_3$ we get the $m_3=h_1(r_4)$ with

$$h_1(r_4)=-\frac{\frac{r_2 (r_2+1)^2}{(r_4-1)^2}+\frac{(r_2+1)^2}{(r_2+r_4)^2}+r_4}{\frac{(r_2+1)^2}{(r_4-1)^2 (r_2-r_3)^2}-\frac{(r_2+1)^2}{(r_3+1)^2 (r_2+r_4)^2}+\frac{1}{(r_3+r_4)^2}}$$

Notice that $h_1(-f(r_3))$ must be $(1+r_3)^2$ because when $m_3=(1+r_3)^2$ we have that $m_2=0$, leaving only two primaries, one at $(1,0)$ and the other at $(-r_3,0)$. Since $-f(r_3)=L_3(r_3)$ is the only Lagrange point to the left of $-r_3$ for these two primaries, then $h_1(-f(r_3))=(1+r_3)^2$. More precisely, we have that the only solution $h_1(r_4)=(1+r_3)^2$ with $r_4<-r_3$ is $r_4=-f(r_3)$. Using a similar argument we must have that the only solution of the equation $h_1(r_4)=0$ with $r_4\le -r_3$ must be $r_4=-r_3$. This time $m_3=0$, leave only two primaries, one at $(1,0)$ and the other at $(-r_2,0)$.  Recall that the only Lagrange point with $r_4\le -r_3$ is $r_4=-r_3$. We conclude that for any $m_3\in (0,(1+r_3)^2)$ there is an Euler-Lagrange point  $r_4$ between $-f(r_3)$ and $-r_3$. Figure \ref{graphh1} shows the graph of the function $m_3=h_1(r_4)$ when $r_2=2$.

\item
When $r_4$ is between $-r_3$ and $-r_2$. We have that $m_3=h_2(r_4)$ with

$$h_2(r_4)=\frac{\frac{r_2 (r_2+1)^2}{(r_4-1)^2}+\frac{(r_2+1)^2}{(r_2+r_4)^2}+r_4}{-\frac{(r_2+1)^2}{(r_4-1)^2 (r_2-r_3)^2}+\frac{(r_2+1)^2}{(r_3+1)^2 (r_2+r_4)^2}+\frac{1}{(r_3+r_4)^2}} $$

Similar arguments show that the only solution of the equation $h_2(r_4)=(1+r_3)^2$ with $-r_3\le r_4\le -r_2$ is $r_4=-r_2$ and the only solution of the equation $h_2(r_4)=0$ with $-r_3\le r_4\le -r_2$ is $r_4=-r_3$. We conclude that for any $m_3\in (0,(1+r_3)^2)$ there is an Euler-Lagrange point with $r_4$ between $-r_3$ and $-r_2$. See Figure \ref{graphh2}. 

\item
When $r_4$ is between $-r_2$ and $1$, we have that $m_3=h_3(r_4)$ with

$$h_3(r_4)=\frac{\frac{r_2 (r_2+1)^2}{(r_4-1)^2}-\frac{(r_2+1)^2}{(r_2+r_4)^2}+r_4}{-\frac{(r_2+1)^2}{(r_4-1)^2 (r_2-r_3)^2}-\frac{(r_2+1)^2}{(r_3+1)^2 (r_2+r_4)^2}+\frac{1}{(r_3+r_4)^2}}$$

Similar arguments as in the previous item show that the only solution of the equation $h_3(r_4)=(1+r_3)^2$ with $-r_2\le r_4\le 1$ is $r_4=-r_2$ and the only solution of the equation $h_3(r_4)=0$ with $-r_2\le r_4\le 1$ is $r_4=L_1(r_2)$. We conclude that for any $m_3\in (0,(1+r_3)^2)$ there is an Euler-Lagrange point with $r_4$ between $-r_2$ and $L_1(r_2)$. See Figure \ref{graphh3}. 
\item
When $r_4>1$, we have that $m_3=h_4(r_4)$ with

$$h_4(r_4)=\frac{-\frac{r_2 (r_2+1)^2}{(r_4-1)^2}-\frac{(r_2+1)^2}{(r_2+r_4)^2}+r_4}{\frac{(r_2+1)^2}{(r_4-1)^2 (r_2-r_3)^2}-\frac{(r_2+1)^2}{(r_3+1)^2 (r_2+r_4)^2}+\frac{1}{(r_3+r_4)^2}}$$

Similar arguments as before show that the only solution of the equation $h_3(r_4)=(1+r_3)^2$ with $r_4\ge1$ is $r_4=L_3(r_3)$ and the only solution of the equation $h_3(r_4)=0$ with $r_4>1$ is $r_4=L_1(r_2)$. We conclude that for any $m_3\in (0,(1+r_3)^2)$ there is an Euler-Lagrange point  with $r_4$ between $L_2(r_2)$ and $L_2(r_3)$. See Figure \ref{graphh4}. 

\end{itemize}

 \begin{figure}[h]
\vskip.2cm\includegraphics[scale=0.59]{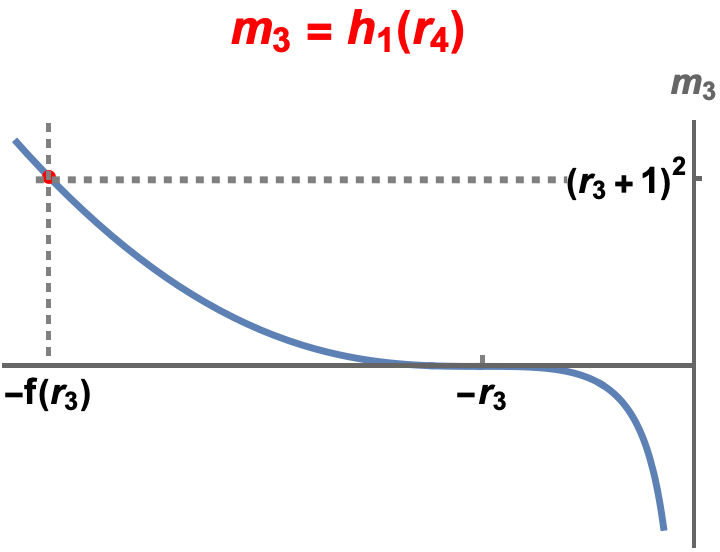}
\caption{For this graph $r_2=2$, $r_3=f(2)=3.7471...$ and $f(r_3)=6.0305...$. Every point of the form $(r_4,0)$ with $r_4$ between $-f(r_3)$ and $-r_3$ is an Euler-Lagrange point if we take $m_3=h_1(r_4)$. }
\label{graphh1}
\end{figure}

 \begin{figure}[h]
\vskip.2cm\includegraphics[scale=0.59]{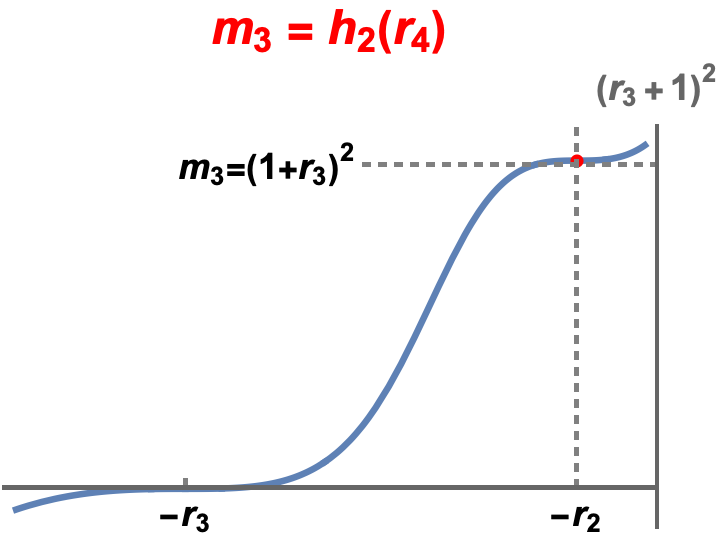}
\caption{For this graph $r_2=2$ and $r_3=f(2)=3.7471...$. Every point of the form $(r_4,0)$ with $r_4$ between $-r_3$ and $-r_2$ is an Euler-Lagrange point if we take $m_3=h_2(r_4)$. }
\label{graphh2}
\end{figure}

 \begin{figure}[h]
\vskip.2cm\includegraphics[scale=0.59]{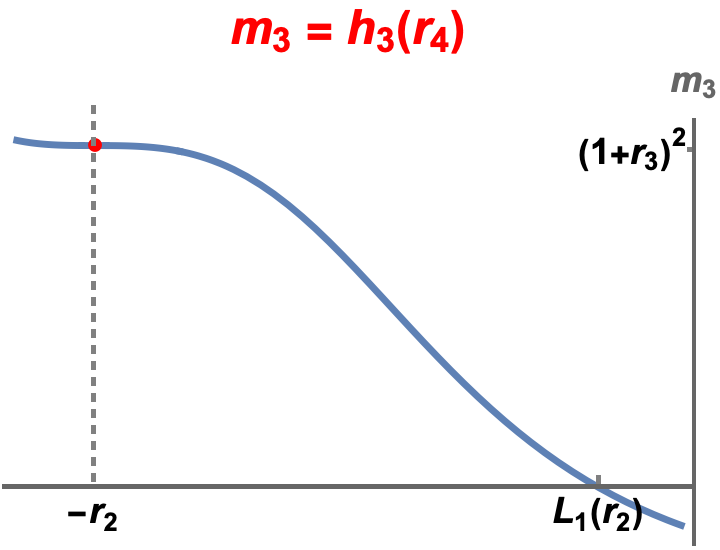}
\caption{For this graph $r_2=2$ and $L_1(r_2)=-0.71225...$  Every point of the form $(r_4,0)$ with $r_4$ between $-r_2$ and $L_1(r_2)$ is an Euler-Lagrange point if we take $m_3=h_3(r_4)$. }
\label{graphh3}
\end{figure}

 \begin{figure}[h]
\vskip.2cm\includegraphics[scale=0.59]{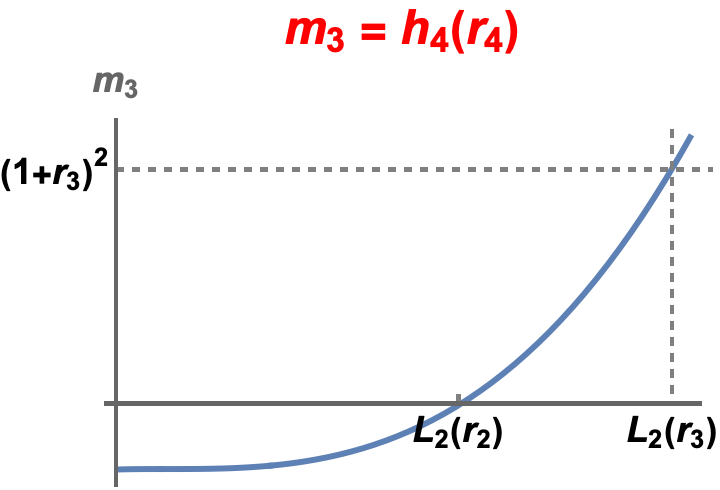}
\caption{For this graph $r_2=2$, $r_3=f(2)=3.7471...$,  $L_2(r_2)=3.4090...$ and $L_2(r_3)=5.161...$  Every point of the form $(r_4,0)$ with $r_4$ between $L_2(r_2)$ and $L_2(r_3)$ is an Euler-Lagrange point if we take $m_3=h_4(r_4)$. }
\label{graphh4}
\end{figure}

\subsubsection{The case $r_2>0$ and $r_5\ne0$} In this subsection we consider the Euler-Lagrange points that are not collinear with the primaries. We can check that Equation (\ref{eq8}) and Equation (\ref{eq7}), this is, the equations  $f_2(m_1,m_2,m_3,r_2r_3,r_4,r_5)=0$  and $f_1(m_1,m_2,m_3,r_2,r_3,r_4,r_5)=0$, respectively,  reduce to the equations
%after replacing the expression for $m_1$ and $m_2$ in terms of $m_3$, the equation  $f_2(m_1,m_2,m_3,r_2=0,r_3=1,r_4,r_5)=0$ reduces to the equation 

\begin{eqnarray}\label{feqs1}
r_5q_1=0\quad\text{and}\quad q_2+r_4 q_1
\end{eqnarray}

where 
%\scriptsize 7 pts, \tiny 5 pts \footnotesize 8pt

{\scriptsize
\begin{eqnarray}
  q_1&=&-\frac{m_1}{\left(r_4^2-2 r_4+r_5^2+1\right)^{3/2}}-\frac{m_2}{\left(r_2^2+2 r_2 r_4+r_4^2+r_5^2\right)^{3/2}}-\frac{m_3}{\left(r_3^2+2 r_3 r_4+r_4^2+r_5^2\right)^{3/2}}+1\\
  q_2&=&-\frac{m_1}{\left(r_4^2-2 r_4+r_5^2+1\right)^{3/2}}+\frac{m_2 r_2}{\left(r_2^2+2 r_2 r_4+r_4^2+r_5^2\right)^{3/2}}+\frac{m_3 r_3}{\left(r_3^2+2 r_3 r_4+r_4^2+r_5^2\right)^{3/2}}
\end{eqnarray}
}

Since we are assuming that $r_5\ne0$, then we need to solve the system $q_1=q_2=0$. We obtain the equation for the Euler-Lagrange points by replacing $m_1$ and $m_2$ using Equations (\ref{m1}) and (\ref{m2}) in the expressions $q_1=0$ and $q_2=0$ and then solving for $m_3$ in each of the equations. We obtain that $m_3=q_3(r_2,r_3,r_4,r_5)$ and  $m_3=q_4(r_2,r_3,r_4,r_5)$ with

{\scriptsize
\begin{eqnarray}\label{eqq3}
q_3=-\frac{\frac{(r_2+1)^2}{\left(r_2^2+2 r_2 r_4+r_4^2+r_5^2\right)^{3/2}}+\frac{r_2 (r_2+1)^2}{\left(r_4^2-2 r_4+r_5^2+1\right)^{3/2}}-1}{-\frac{(r_2+1)^2}{(r_3+1)^2 \left(r_2^2+2 r_2 r_4+r_4^2+r_5^2\right)^{3/2}}+\frac{(r_2+1)^2}{(r_2-r_3)^2 \left(r_4^2-2 r_4+r_5^2+1\right)^{3/2}}+\frac{1}{\left(r_3^2+2 r_3 r_4+r_4^2+r_5^2\right)^{3/2}}}
\end{eqnarray}
}

{\scriptsize
\begin{eqnarray}\label{eqq4}
q_4=\frac{r_2 (r_2+1)^2 \left(\frac{1}{\left(r_4^2-2 r_4+r_5^2+1\right)^{3/2}}-\frac{1}{\left(r_2^2+2 r_2 r_4+r_4^2+r_5^2\right)^{3/2}}\right)}{-\frac{r_2 (r_2+1)^2}{(r_3+1)^2 \left(r_2^2+2 r_2 r_4+r_4^2+r_5^2\right)^{3/2}}-\frac{(r_2+1)^2}{(r_2-r_3)^2 \left(r_4^2-2 r_4+r_5^2+1\right)^{3/2}}+\frac{r_3}{\left(r_3^2+2 r_3 r_4+r_4^2+r_5^2\right)^{3/2}}}
\end{eqnarray}
}

Therefore, the Euler-Lagrange points with $r_5\ne0$ must satisfy the relation $q_3=q_4$. If $m_3=(1+r_3)^2$ then $m_2=0$, and then we only have two primaries at $(1,0)$ and $(-r_3,0)$.  Therefore the points
$L_4(r_3)$ and $L_5(r_3)$ satisfy the equation $q_3=q_4$. Likewise, if $m_3=0$ then, we only have two primaries at $(1,0)$ and $(-r_2,0)$. Therefore $L_4(r_2)$ and $L_5(r_2)$ must also satisfy the equation $q_3=q_4$. See Figure \ref{relforr5d0}.

 \begin{figure}[h]
\vskip.2cm\includegraphics[scale=0.59]{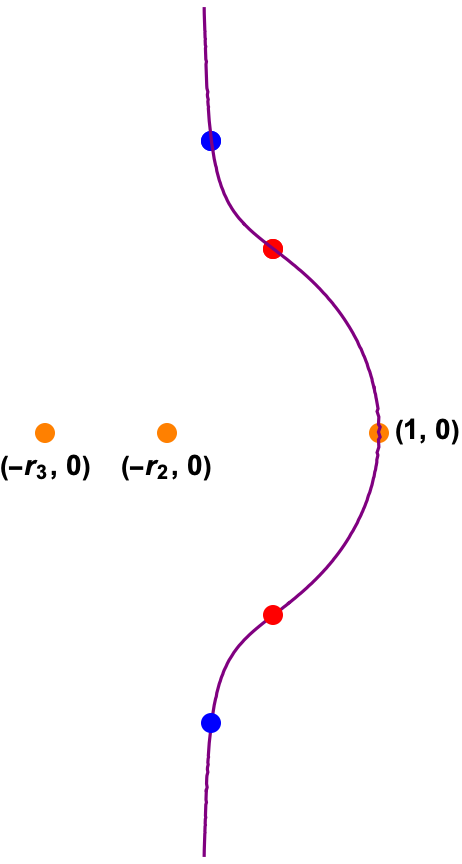}
\caption{This graph shows points in the plane that satisfy the equation $q_3=q_4$. For this example we have taken $r_2=2$ and $r_3=f(2)=3.7471...$. We have plotted the points $L_4(r_3)$ and $L_5(r_3)$ as well as $L_4(r_2)$ and $L_5(r_2)$. }
\label{relforr5d0}
\end{figure}

We summarize the computations above in the following theorem.

\begin{thm}
For any $r_2>0$, and any $m_3\in (0,(1+r_3)^2)$ with $r_3=f(r_2)$, there are $6$ Euler-Lagrange points distributed as follows

\begin{itemize}
\item
One of the form $(r_4,0)$ with $L_3(r_3)<r_4<-r_3$ satisfying  $m_3=h_1(r_4)$
\item
One of the form $(r_4,0)$ with $-r_3<r_4<-r_2$ satisfying  $m_3=h_2(r_4)$
\item
One of the form $(r_4,0)$ with $-r_2<r_4<L_1(r_2)$ satisfying  $m_3=h_3(r_4)$
\item
One of the form $(r_4,0)$ with $L_2(r_2)<r_4<L_2(r_3)$ satisfying  $m_3=h_4(r_4)$
\item
Two of the form $(r_4,\pm r_5)$ with $r_5\ne0$ satisfying  $q_4=q_5$
\end{itemize}

\end{thm}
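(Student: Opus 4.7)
The plan is to split the proof into the two cases already handled in the text, namely $r_5=0$ and $r_5\neq 0$, and in each case verify existence (and count) of the Euler-Lagrange points by studying the one-parameter families of equations $m_3=h_i(r_4)$ and the level curve $q_3=q_4$. The theorem is essentially a summary of the four subcase analyses that were carried out, so my job is to package those into a clean existence-and-counting argument using continuity, the Intermediate Value Theorem, and the known behavior at the degenerate endpoints.

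For the four collinear Euler-Lagrange points with $r_5=0$, I would proceed interval by interval. On each of the open intervals $(L_3(r_3),-r_3)$, $(-r_3,-r_2)$, $(-r_2,L_1(r_2))$, $(L_2(r_2),L_2(r_3))$, the function $h_i(r_4)$ is smooth and well-defined (the denominator does not vanish there, which I would verify by a sign check). Its boundary behavior is the key input: at each endpoint where $r_4$ coincides with $-r_3$, $-r_2$, or a Lagrange point of a two-body subproblem, one of $m_2$ or $m_3$ must be $0$ or $(1+r_3)^2$, so $h_i$ approaches $0$ or $(1+r_3)^2$ accordingly. These limits, combined with continuity, give at least one solution to $m_3=h_i(r_4)$ for every $m_3\in(0,(1+r_3)^2)$ by the IVT. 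For \emph{uniqueness} on each interval (which is what lets me count exactly six points in the theorem), I would argue that $h_i$ is strictly monotonic; the cleanest way is to show that its derivative has constant sign on the interior, which in turn reduces to verifying that an explicit rational expression has no zero there, and that can be checked either by a Sturm/sign analysis or by noting that each intermediate point $r_4$ in the open interval produces a bona fide one-parameter family $ES(r_2,m_3)$ to which a unique $m_3$ is attached.

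For the non-collinear case $r_5\neq 0$, solutions of $\ddot X_4=\sum m_i(X_i-X_4)/|X_i-X_4|^3$ must satisfy $q_1=q_2=0$, which, after eliminating $m_3$, becomes the single equation $q_3(r_2,r_3,r_4,r_5)=q_4(r_2,r_3,r_4,r_5)$ in $(r_4,r_5)$. The symmetry $r_5\mapsto -r_5$ of the original system forces solutions to come in pairs $(r_4,\pm r_5)$, so it is enough to work in $r_5>0$ and produce exactly one solution there. The two endpoint values of $m_3$ pin down two known solutions: when $m_3=0$ the problem reduces to a restricted three-body problem with primaries at $(1,0)$ and $(-r_2,0)$, whose non-collinear equilibria are $L_4(r_2)$ and $L_5(r_2)$; when $m_3=(1+r_3)^2$ the primary $(-r_2,0)$ drops out, giving $L_4(r_3)$ and $L_5(r_3)$. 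Thus the level set $\{q_3=q_4\}$ in the $(r_4,r_5)$-half-plane $r_5>0$ contains both $L_4(r_2)$ and $L_4(r_3)$; by continuity of the equation in $m_3$ and the Implicit Function Theorem applied along a path of values of $m_3$ in $(0,(1+r_3)^2)$, each intermediate value of $m_3$ produces at least one such solution, and reflecting in $r_5=0$ gives the second one.

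The main obstacle will be \emph{uniqueness}, both in the collinear and the non-collinear case. In the four collinear subcases one must rule out oscillations of $h_i$ inside the interval; this is a strict-monotonicity claim for a messy rational expression, and I expect it to require either a direct Sturm analysis of the numerator of $h_i'$ or a geometric argument that the equilibrium position of a test mass on a fixed ray depends monotonically on the mass $m_3$ (which is physically plausible but needs proof). For the non-collinear branch, the obstacle is showing that the curve $\{q_3=q_4\}\cap\{r_5>0\}$ meets each $m_3$-level set in exactly one point; here I would try to parametrize the curve by $m_3$ and show that the Jacobian of the map $(r_4,r_5)\mapsto (q_1,q_2)$ is nonzero along the curve, so that the IFT gives a unique smooth branch connecting $L_4(r_2)$ to $L_4(r_3)$ and no extra components appear.
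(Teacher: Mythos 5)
Your plan follows the paper's route essentially verbatim: the paper's ``proof'' of this theorem is just the preceding subsection-by-subsection computation --- solve for $m_3=h_i(r_4)$ on each of the four intervals, identify the endpoint values of $h_i$ with the two-body Lagrange points $L_1,L_2,L_3$ in the degenerate cases $m_2=0$ and $m_3=0$, invoke continuity for existence, and for $r_5\neq 0$ reduce to the curve $q_3=q_4$ passing through $L_4(r_2),L_5(r_2)$ and $L_4(r_3),L_5(r_3)$. Where you differ is only in candor: you correctly flag that the exact count of six requires strict monotonicity of each $h_i$ and single-valuedness of the $q_3=q_4$ branch over $m_3$, which the paper settles only by inspecting Figures \ref{graphh1}--\ref{graphh4} and \ref{relforr5d0}, so your proposed Sturm/derivative-sign and implicit-function arguments would strengthen rather than merely reproduce the published argument; the one caveat is that your alternative observation that ``each $r_4$ determines a unique $m_3$'' establishes only that $h_i$ is a well-defined function, not that it is injective, so it cannot substitute for the monotonicity check.
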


 \begin{figure}[h]
\vskip.2cm\includegraphics[scale=0.59]{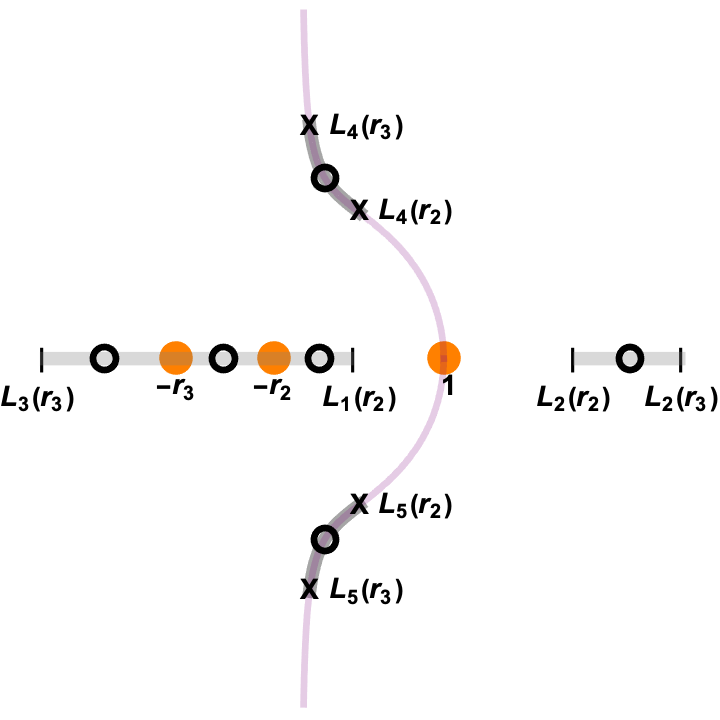}
\caption{There are 4 collinear and two non collinear Lagrange points for each Euler solution. The image shows the possible location of these 6 Lagrange points in terms of the Lagrange .}
\label{LagrangeE}
\end{figure}

\end{document}